\pgfplotsset{compat=1.12}
\def\sideremark#1{\ifvmode\leavevmode\fi\vadjust{\vbox to0pt{\vss 
      \hbox to 0pt{\hskip\hsize\hskip1em           
 \vbox{\hsize2cm\tiny\raggedright\pretolerance10000
 \noindent #1\hfill}\hss}\vbox to8pt{\vfil}\vss}}} %
\newtheorem{theorem}{Theorem}[section]
\newtheorem{lemma}[theorem]{Lemma}
\newtheorem{proposition}[theorem]{Proposition}
\theoremstyle{definition}
\newtheorem{example}[theorem]{Example}
\newtheorem{remark}[theorem]{Remark}
\newtheorem{definition}[theorem]{Definition}
\numberwithin{equation}{section}
\begin{document}
\title[Discrete equivalence of non-positive at infinity plane valuations]{Discrete equivalence of non-positive at infinity plane valuations}

\author[C. Galindo]{Carlos Galindo}

\address{Universitat Jaume I, Campus de Riu Sec, Departamento de Matem\'aticas \& Institut Universitari de Matem\`atiques i Aplicacions de Castell\'o, 12071
Caste\-ll\'on de la Plana, Spain.}\email{galindo@uji.es}  \email{cavila@uji.es}

\author[F. Monserrat]{Francisco Monserrat}
\address{Instituto Universitario de
Matem\'atica Pura y Aplicada, Universidad Polit\'ecnica de Valencia,
Camino de Vera s/n, 46022 Valencia (Spain).}
\email{framonde@mat.upv.es}

\author[C.J. Moreno-\'Avila]{Carlos Jes\'us Moreno-\'Avila}

\subjclass[2010]{Primary: 14C20, 14E15, 13A18}
\keywords{Non-positive at infinity valuations; plane valuations; singularities}
\thanks{Partially supported by the Spanish Government, grants  MTM2015-65764-C3-2-P, MTM2016-81735-REDT, PGC2018-096446-B-C22 and BES-2016-076314, as well as by Generalitat Valenciana, grant AICO-2019-2023 and Universitat Jaume I, grant UJI-B2018-10.}

\begin{abstract}
Non-positive at infinity valuations are a class of real plane  valuations which have a nice geometrical behavior. They are divided in three types. We study the dual graphs of non-positive at infinity valuations and give an algorithm for obtaining them. Moreover we compare these graphs attending the type of their corresponding valuation.
\end{abstract}

\maketitle

\section{Introduction}
The concept of non-positive at infinity (NPI) valuation was introduced in \cite{GalMon} (see also, \cite{Mond}) and afterwards extended in \cite{GalMonMor}. Valuations of fraction fields of local regular rings centered at them and their graded algebras are important tools for studying local uniformization of singularities \cite{Zar3,Abh1,ZarSam,Tei,Tei2}. Some recent advances in the topic are \cite{Herol,Cut2,Nov}. NPI valuations are only defined for the bi-dimensional case and have the advantage that they provide not only local but also global information about the surfaces where they are considered.

Valuations of fraction fields of  bi-dimensional regular local rings $R$ centered at them are named, in this paper, plane valuations. They are in one-to-one correspondence with the set of simple sequences of point blowing-ups starting at Spec$R$. These valuations were classified in five types  by Spivakosky \cite{Spiv} attending the dual graphs attached to the above mentioned sequences. Divisorial and irrational valuations (in the terminology of \cite{FavJon1}) are two of these types. They are valuations whose semigroup of values is included in $\mathbb{R}$.

NPI valuations $\nu$ are regarded over the field of rational functions of the projective plane $S=\mathbb{P}^2$ or of some Hirzebruch surface $S=\mathbb{F}_\delta$. They are centered at some point $p \in S$ and must satisfy that $\nu(f) \leq 0$ for all $f \in \mathcal{O}_S (S \setminus T)$, $T$ being a suitable fixed curve on $S$ considered as at the infinity (see Definition \ref{def32}). In the projective case, NPI valuations are centered at infinity, a class of valuations considered by several authors \cite{CamPilReg,FavJon2,FavJon3,Jon,Mond}.

NPI valuations $\nu$ make possible to explicitly determine some global and local objects of the geometry linked to $\nu$ for which there is no explicit description in the general case. For instance, the Seshadri-related  constants introduced in \cite{CutEinLaz} (see \cite{DumHarKurRoeSze} for the plane case) can be explicitly described for NPI valuations \cite{GalMon,GalMonMor2} and are essential for providing some evidences to the valuative Nagata conjecture (which implies the Nagata classical conjecture) \cite{GalMonMoy} (see also \cite{DumHarKurRoeSze}). Newton-Okounkov bodies \cite{Oko3,KavKho,CilFarKurLozRoeShr} of flags determined by valuations of this type can also be explicitly described \cite{GalMonMoyNic2,GalMonMor2}. Finally, NPI divisorial valuations characterize those rational surfaces $Z$ (given by simple sequences of point blowing-ups) with polyhedral cone of curves and minimum number of extremal rays.  Even, in the projective case, they allow us to decide when the Cox ring of $Z$ is finitely generated \cite{GalMon,GalMonMor}.

Although a valuation is an algebraic object, the concept of NPI valuation depends on the (classical) minimal surface where it is considered giving rise to three types of valuations. We consider projective and Hirzebruch (real plane) valuations and divide the last ones in two sets, special and non-special valuations (see Definition \ref{def31}). The definition of NPI valuation varies slightly in the above three cases according the corresponding curve $T$ at infinity (see Definition \ref{def32}).

We think it is interesting to compare these types of valuations for a better knowledge of the surfaces that provide and, also, in the hope of finding a more purely algebraic definition.

In this paper we focus on dual graphs of NPI valuations comparing those corresponding to valuations of the above mentioned types. Our main result, Theorem \ref{Thm_Comparacion_grafos}, shows the inclusions among the sets of dual graphs admitting some valuation of the above three types. We also prove that these inclusions are strict. For simplicity of our exposition, dual graphs are represented by tuples of rational or real numbers and valuations having the same tuple are called discretely equivalent (see Definition \ref{Def_dicrete_class}). Within the sets of projective, special Hirzebruch and non-special Hirzebruch valuations,  the discrete classes corresponding to NPI valuations are simply determined by an inequality (see Theorem \ref{Th_CondNumeric_divisorial}). This means that a valuation  whose discrete class does not satisfy one of the above mentioned inequalities is not NPI of the corresponding type.

Geometrically speaking, what we prove is that the number of extremal rays of the cone of curves of a rational surface (coming from a simple sequence $\pi$ of point blowing-ups) is not the minimum possible whenever the corresponding inequality in Theorem \ref{Th_CondNumeric_divisorial} (whose data are only determined by the dual graph of $\pi$) does not hold. Otherwise, to decide if the cone of curves has (or not) the simplest shape we need to know the set of initial free points in $\pi$.

Generalities about plane valuations are given in Section \ref{Sect_Uno} and NPI valuations are introduced in Section \ref{Sect_Dos} where we show some of their properties. The core of the paper is Section \ref{Sect_Tres}, here we make a comparative study of the discrete equivalence classes corresponding to  NPI valuations. Finally, Section \ref{Sect_Cuatro} provides an algorithm which inductively determines dual graphs of NPI valuations. The algorithm can  potentially give all dual graphs corresponding to NPI valuations.

\section{Plane valuations}\label{Sect_Uno}

\subsection{Generalities}

A valuation $\nu$ of a field $K$ is an onto map $$\nu:K^*(=K\setminus\{0\})\to G,$$ $G$ being a commutative group, which for $a,b\in K^*$ satisfies
$$
\nu(a+b)\geq \min\{\nu(a),\nu(b)\} \ \ \text{and} \ \ \nu(ab)=\nu(a)+\nu(b).
$$
The \emph{valuation ring} of $\nu$, $R_\nu:=\{f\in K^*\ | \ \nu(f)\geq 0\}\cup \{0\}$, is a local ring whose maximal ideal is $\mathfrak{m}_\nu=\{f\in K^* \ | \ \nu(f)>0\} \cup \{0\}$. We say that $\nu$ is \emph{plane} whenever $K$ is the fraction field of a bi-dimensional regular local ring $(R,\mathfrak{m})$ and it is \emph{centered at }$R$ which means that $R\cap\mathfrak{m}_\nu=\mathfrak{m}$. We also assume that $k:=R/\mathfrak{m}$ is algebraically closed. According \cite{Spiv}, plane valuations correspond one-to-one to simple sequences of point blowing-ups:
\begin{equation}\label{Eq_sequencepointblowingups}
\pi: \cdots \rightarrow Z_n\xrightarrow{\pi_n} Z_{n-1}\rightarrow \cdots \rightarrow Z_1 \xrightarrow{\pi_1} Z_0=\text{Spec}R,
\end{equation}
where $\pi_1$ is the blowing-up of Spec$R$ at the closed point $p=p_1$ defined by $\mathfrak{m}$ and $\pi_{i+1}$, $i \geq 1$, the blowing-up of  $Z_i$ at the unique point $p_{i+1}$ in the exceptional divisor $E_i$ created by $\pi_i$ satisfying that $\nu$ is centered at the local ring $\mathcal{O}_{Z_i,p_{i+1}}$.

The set of points $\mathcal{C}_\nu=\{p=p_1,p_2,\ldots\}$ is the \emph{configuration of infinitely near points} of $\nu$. We say that $p_i$ is \emph{proximate} to $p_j$, denoted by $p_i\to p_j$, if $i>j$ and $p_i$ belongs to the strict transform of $E_j$ on $Z_{i-1}$. Strict transforms of divisors $E_i$ are also denoted by $E_i$. Points $p_i$ (and  exceptional divisors $E_i$) such that $p_i$ is proximate to other point $p_j$ such that $j<i-1$ are called \emph{satellite}; otherwise they are named \emph{free}.

\subsection{The dual graph}
The dual graph $\Gamma_\nu$ of a plane valuation is a (finite or infinite) labelled tree whose vertices represent the exceptional divisors $E_i$ (they are labelled with the number $i$) and two vertices are joined by an edge whenever $E_i \cap E_j \not = \emptyset$. Spivakosky in \cite{Spiv} classified plane valuations in five types according the structure of their dual graphs. In this paper  we are interested in quasi-monomial valuations in the terminology of \cite{FavJon1}, which correspond with those belonging to two of the types in the Spivakovsky' classification: divisorial and irrational valuations. {\it Divisorial valuations} are those whose attached sequence (\ref{Eq_sequencepointblowingups}) is finite while for {\it irrational valuations} the sequence is infinite and there is an index $n$ such that every point $p_j \in \mathcal{C}_\nu$, $j\geq n$, is satellite but there is no point in $\mathcal{C}_\nu$ admitting infinitely many others proximate to it.

The dual graph $\Gamma_\nu$ of a divisorial valuation has the shape in Figure \ref{Fig_dualgraphdivisorial}, where $g$ is a nonnegative integer. When $g >0$ we add labels $st_j$, $0 < j \leq g$, (respectively, $l_j$, $0 \leq j \leq g+1$), to the vertices with degree $3$ (respectively, with degree $1$). Notice that we have set $l_0 = \mathbf{1}$ and it could happen that $st_{g} = l_{g+1}$, being both with degree two.  When $g=0$ we set $l_0 = \mathbf{1} = st_{0}$ and $l_1$ as above. The graph is divided in subgraphs $\Gamma_\nu^j$, $1 \leq j \leq g+1$, where $\Gamma_\nu^1$ contains the vertices (and attached edges) with labels $1 \leq i \leq st_1$, $\Gamma_\nu^j$, $2 \leq j \leq g$, those with labels $st_{j-1} \leq i \leq st_j$ and $\Gamma_\nu^{g+1}$ (usually named the tail of $\Gamma_\nu$) the vertices labelled $i$, $st_g \leq i \leq l_{g+1}$. Notice that when $g=0$, $st_1$ is not defined and $\Gamma_\nu$ coincides with its tail.


\begin{center}
\begin{figure}[h]
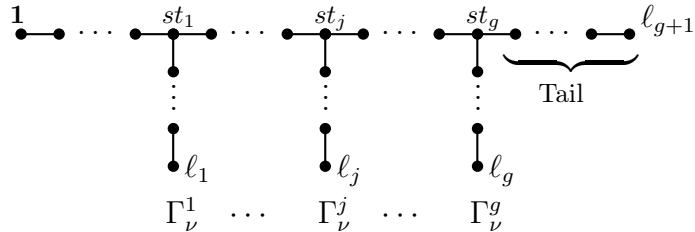


\setlength{\unitlength}{0.5cm}%
\begin{Picture}(0,0)(20,8)
\thicklines


\xLINE(0,6)(1,6)
\Put(0,6){\circle*{0.3}}
\Put(1,6){\circle*{0.3}}
\put(1,6){$\;\;\ldots\;\;$}
\xLINE(3,6)(4,6)
\Put(3,6){\circle*{0.3}}
\Put(4,6){\circle*{0.3}}
\xLINE(4,6)(4,5)
\Put(4,5){\circle*{0.3}}
\Put(3.9,4){$\vdots$}
\xLINE(4,3.5)(4,2.5)
\Put(4,3.5){\circle*{0.3}}
\Put(4,2.5){\circle*{0.3}}
\Put(3.8,1){$\Gamma_{\nu}^1$}

\put(4.3,2.2){$\ell_1$}

\Put(3.7,6.3){\footnotesize $st_1$}
\Put(-0.3,6.3){\footnotesize $\mathbf{1}$}



\xLINE(4,6)(5,6)
\Put(4,6){\circle*{0.3}}
\Put(5,6){\circle*{0.3}}
\Put(5,6){$\;\;\ldots\;\;$}
\xLINE(7,6)(8,6)
\Put(7,6){\circle*{0.3}}
\Put(8,6){\circle*{0.3}}
\xLINE(8,6)(8,5)
\Put(8,5){\circle*{0.3}}
\Put(7.9,4){$\vdots$}
\xLINE(8,3.5)(8,2.5)
\Put(8,3.5){\circle*{0.3}}
\Put(8,2.5){\circle*{0.3}}
\Put(7.8,1){$\Gamma_{\nu}^j$}

\put(8.3,2.2){$\ell_j$}

\Put(7.7,6.3){\footnotesize $st_j$}

\Put(5,1){$\;\;\cdots\;\;$}


\xLINE(8,6)(9,6)
\Put(8,6){\circle*{0.3}}
\Put(9,6){\circle*{0.3}}
\put(9,6){$\;\;\ldots\;\;$}
\xLINE(11,6)(12,6)
\Put(11,6){\circle*{0.3}}
\Put(12,6){\circle*{0.3}}
\xLINE(12,6)(12,5)
\Put(12,5){\circle*{0.3}}
\Put(11.9,4){$\vdots$}
\xLINE(12,3.5)(12,2.5)
\Put(12,3.5){\circle*{0.3}}
\Put(12,2.5){\circle*{0.3}}
\Put(11.8,1){$\Gamma_{\nu}^g$}

\put(12.3,2.2){$\ell_g$}

\Put(11.7,6.3){\footnotesize $st_g$}

\Put(9,1){$\;\;\cdots\;\;$}


\xLINE(12,6)(13,6)
\Put(13,6){\circle*{0.3}}
\Put(13,6){$\;\;\ldots\;\;$}
\Put(15,6){\circle*{0.3}}
\xLINE(15,6)(16,6)
\Put(16,6){\circle*{0.3}}


\Put(12.7,5.5){$\underbrace{\;\;\;\;\;\;\;\;\;\;\;\;\;\;\;}$}
\Put(13.6,4.2){{\footnotesize Tail}}

\Put(16.3,6.2){$\ell_{g+1}$}

\end{Picture}
  \caption{Dual graph of a divisorial valuation}
  \label{Fig_dualgraphdivisorial}
\end{figure}
\end{center}

The dual graph of an irrational valuation is like that of a divisorial valuation with the exception of $\Gamma_\nu^{g+1}$. This subgraph is an infinite graph and their vertices excluding $st_{g}$ and $l_{g+1}$ have degree two.

\subsection{Some invariants}

There exist several invariants usually considered for studying plane valuations. In this paper we will only consider the following two.

\subsubsection{Puiseux exponents}
\label{puis}
Denote by $\mathfrak{m}_i$ the maximal ideal of the local ring $\mathcal{O}_{Z_i,p_{i+1}}$, $i\geq 0$ and write $\nu(\mathfrak{m}_i):=\min\{\nu(x) \,|\, x\in\mathfrak{m}_i\setminus \{0\}\}$. Following \cite{Spiv} (see also \cite{DelGalNun}), the sequence of Puiseux exponents of a divisorial or irrational valuation $\nu$ is an ordered set of real numbers $\{\beta_j'(\nu)\}_{j=0}^{g+1}$ such that $\beta_0'(\nu)=1$ and for $j \in \{1, 2, \ldots, g+1\}$, $\beta_j'(\nu)$ is defined by the continued fraction
\[
\langle a_1^j;a_2^j,\ldots,a_{r_j}^j \rangle,
\]
where $a_k^j$, $1 \leq k < r_j+1$ successively counts the number of vertices in $\Gamma_\nu^j$ with the same value $\nu(\mathfrak{m}_i)$. Notice that when $\nu$ is divisorial $\beta_j'(\nu) \in \mathbb{Q}$ for all $j >0$ and $\beta_{g+1}'(\nu)$ is a positive integer. In case $\nu$ is irrational, then $\beta_j'(\nu) \in \mathbb{Q}$ for all $0 < j <g+1$ but $r_{g+1} = \infty$ and $\beta_{g+1}'(\nu) \in \mathbb{R} \setminus \mathbb{Q}$.

\subsubsection{Maximal contact values}
Let $\nu$ be a divisorial or irrational valuation. For $0\leq j\leq g$, write $\beta_j'(\nu)=q_j/p_j\text{ such that gcd}(q_j,p_j)=1$, define $e_g(\nu)=1$ and when $g >0$, set $e_j(\nu)=\prod_{k=j+1}^g p_k$. Following \cite[Sections 6 and 8]{Spiv} and \cite{DelGalNun}, the sequence of \emph{maximal contact values}  $\{\overline{\beta}_j(\nu)\}_{j=0}^{g+1}$ of $\nu$ is defined as follows.
$\overline{\beta}_0(\nu)$ $: =e_0(\nu)$ and
\begin{equation}\label{Cond_betabarraenbetaprima}
\overline{\beta}_{j+1}(\nu)=e_j(\nu)(\beta_{j+1}'(\nu)-1)+ n_j(\nu)\overline{\beta}_j(\nu), \text{ for }0\leq j\leq g,
\end{equation}
where $n_0(\nu)=1$ and, for $j >0$, $n_j(\nu)=e_{j-1}(\nu)/e_j(\nu)$. Notice that $$e_j(\nu)=\text{gcd}(\overline{\beta}_0(\nu),\ldots,\overline{\beta}_j(\nu)).$$
In addition it holds that the maximal contact values satisfy the following equalities:
\begin{equation}\label{Eq_betabarravaluationcurvettes}
\overline{\beta}_0(\nu)= \nu(\mathfrak{m}) \text{ and } \overline{\beta}_j(\nu) = \nu(\varphi_{l_j}), 0\leq j\leq g+1 \text{ (with }l_0:=1),
\end{equation}
where $\varphi_{l_j}$ is an analytically irreducible element of $R$ whose strict transform on $Z_{l_j}$ is non-singular and transversal to the exceptional divisor $E_{l_j}$. Moreover the set $\{\overline{\beta}_j\}_{j=0}^{g+1}$  generates the semigroup of values of $\nu$, $\nu(R\setminus\{0\})\cup\{0\}$.

Given a divisorial valuation $\nu$, we denote by $\nu^N$ its normalization, that is the equivalent valuation (in the Zariski-Samuel's sense \cite[page 32]{ZarSam})) such that $\nu^N(\mathfrak{m})=1$. In this paper irrational valuations will ever be considered normalized. Then, if $\nu$ is an irrational valuation and $\nu_i^N$ the normalized divisorial valuation defined by the divisor $E_i$ in the sequence (\ref{Eq_sequencepointblowingups}) associated to $\nu$, by  \cite[Theorem 6.1]{Gal}, it holds that
$$
\nu(f)=\lim_{i\to\infty}\nu_i^N(f), \text{for all }f\in K^*.
$$

\section{NPI valuations}\label{Sect_Dos}

In this section we introduce some families of plane valuations which provide interesting geometrical information.

From now on we consider sequences of point blowing-ups as in (\ref{Eq_sequencepointblowingups}) but our starting set $Z_0$ will be either the projective plane $\mathbb{P}^2_k := \mathbb{P}^2$ or the $\delta$th Hirzebruch surface $\mathbb{F}_\delta :=\mathbb{P}\left(\mathcal{O}_{\mathbb{P}^1_k} \oplus \mathcal{O}_{\mathbb{P}^1_k}(-\delta)\right)$, $k$ being an algebraically closed field. The corresponding valuations will be named {\it valuations of $Z_0$} since they are valuations of the field of rational functions of $Z_0$, $K(Z_0)$, centered at $\mathcal{O}_{Z_0,p}$, $p$ being a closed point in $Z_0$.

Recall that $\mathbb{F}_\delta$ is a projective  ruled surface over $k$ equipped with a  projective morphism $pr:\mathbb{F}_\delta\to \mathbb{P}^1_k$. Denote by $F$ a fiber of $pr$ and by $M_0$ a section of $pr$ with  self-intersection $-\delta$; when $\delta >0$, $M_0$  is unique  and it is called the \emph{special section}. Set $M$ any linearly equivalent to $\delta F + M_0$ section of $pr$. By convention, those points in $\mathbb{F}_\delta$, $\delta >0$, that belong to $M_0$ are called \emph{special points}. Otherwise we name them \emph{general points}.
\begin{definition}
\label{def31}
{\rm Let $p$ be a point in $\mathbb{F}_\delta$ and $\nu$ a divisorial valuation of $\mathbb{F}_\delta$. The valuation $\nu$ is said to be \emph{special} (with respect to $\mathbb{F}_\delta$ and $p$) if one of the following condition is satisfied:
\begin{itemize}
\item[(1)] $\delta = 0$.
\item[(2)] $\delta >0$ and $p$ is a special point.
\item[(3)] $\delta >0$, $p$ is a general point and there is no integral curve in the complete linear system $|M|$, given by the section $M$, whose strict transform on the surface defined by $\nu$ has negative self-intersection.
\end{itemize}
If $\nu$ does not satisfy any of the above conditions, it will be called \emph{non-special}.

\medskip

An irrational valuation of $\mathbb{F}_\delta$ is called to be {\it special} (respectively, {\it non-special}) whenever $\nu(f)$, $f \in K(\mathbb{F}_\delta)$ can be computed as a limit of normalized special (respectively, non-special) divisorial valuations $\nu_i^N(f)$.}
\end{definition}

Next we are going to introduce the concept of non-positive at infinity valuation of $Z_0$. If $Z_0=\mathbb{P}^2$, denote by $L$ a projective line (the line at infinity) containing $p$. When $Z_0=\mathbb{F}_\delta$, stand $F_1$ for the fiber of $pr$ going through $p$ and, if $\nu$ is non-special, set $M_1$ the section of $pr$ whose strict transform on the surface defined by $\nu$ (if $\nu$ is divisorial) or by $\nu_i^N$, $i \gg 0$ (otherwise)  has negative self-intersection.

\begin{definition}
\label{def32}
{\rm
Let $\nu$ be a divisorial, or irrational, valuation of $Z_0$.

When $Z_0=\mathbb{P}^2,$ we say that $\nu$ is \emph{non-positive at infinity (NPI)}  if $\nu(f)\leq 0$ for all $f\in\mathcal{O}_{\mathbb{P}^2}(\mathbb{P}^2\setminus L)$.

In case $Z_0=\mathbb{F}_\delta$, a special (respectively, non-special) valuation $\nu$ is called \emph{non-positive at infinity (NPI)} if $\nu(f)\leq 0$ for all $f\in\mathcal{O}_{\mathbb{F}_\delta}(\mathbb{F}_\delta\setminus(F_1\cup M_0))$ (respectively, for all
$f\in\mathcal{O}_{\mathbb{F}_\delta}(\mathbb{F}_\delta\setminus(F_1\cup M_1))$).}
\end{definition}

NPI divisorial valuations enjoy nice geometrical properties. Next we briefly describe some of them.

Let $\nu_n$ be a divisorial valuation of $Z_0$ whose sequence (\ref{Eq_sequencepointblowingups}) ends at the surface $Z_n=Z$. Denote Pic$(Z)$ the Picard group of the surface $Z$ and set NE$(Z)$ the cone of curves of that surface regarded as the convex cone of Pic$_{\mathbb{Q}}(Z) = \mathrm{Pic} (Z) \otimes \mathbb{Q}$ generated by the classes of effective divisors on $Z$.

Denote by $\varphi_C$ the germ at $p$ of a curve $C$ on $Z_0$. Moreover set $\varphi_i$, $1\leq i\leq n,$ an analytically irreducible germ of curve at $p$ whose strict transform on $Z_i$ is transversal to $E_i$ at a non-singular point of the exceptional locus. We will also stand $(\phi,\varphi)_p$ for the intersection multiplicity at $p$ of two germs of curve at $p$, $\phi$ and $\varphi$, on $Z_0$.

The following theorem states the above mentioned geometrical properties. Proofs can be found in \cite{GalMon,GalMonMor}.

\begin{theorem}
\label{teo33}
Let $\nu_n$ be a divisorial plane valuation of $Z_0$. Set $Z$ the surface defined by $\nu_n$. Then
\begin{itemize}
\item[(a)] If $Z_0=\mathbb{P}^2$, then the following conditions are equivalent:
\begin{itemize}
\item[(a.1)] The valuation $\nu_n$ is non-positive at infinity.
\item[(a.2)] The divisor $(\varphi_L,\varphi_n)_p E_0^* + \sum_{i=1}^n\nu(\mathfrak{m}_i)E_i^*$ is nef.
\item[(a.3)] The cone of curves $NE(Z)$ is generated by the classes in $\mathrm{Pic}_{\mathbb{Q}}(Z)$ of the divisors $\tilde{L},E_1,E_2,\ldots, E_n$.

\end{itemize}
\medskip
\item [(b)] If $Z_0=\mathbb{F}_\delta$, $\delta\geq 0$ and $\nu_n$ is special, the following statements are equivalent:
\begin{itemize}
\item[(b.1)] The valuation $\nu_n$ is non-positive at infinity.
\item[(b.2)] The divisor $(\varphi_{M_0},\varphi_n)_p F^* + (\varphi_{F_1},\varphi_n)_p M^* + \sum_{i=1}^n\nu(\mathfrak{m}_i)E_i^*$ is nef.
\item[(b.3)] The cone of curves $NE(Z)$ is generated by the classes in $\mathrm{Pic}_{\mathbb{Q}}(Z)$ of the divisors $\tilde{F}_1,\tilde{M}_0,E_1,E_2,\ldots, E_n$.
\end{itemize}
\medskip
\item[(c)] If $Z_0=\mathbb{F}_\delta$, $\delta\geq 1$, and $\nu_n$ is non-special, the following conditions are equivalent:
\begin{itemize}
\item[(c.1)] The valuation $\nu_n$ is non-positive at infinity.
\item[(c.2)] The divisor $(\varphi_{M_1},\varphi_n)_p F^* + (\varphi_{F_1},\varphi_n)_p M^* + \sum_{i=1}^n\nu(\mathfrak{m}_i)E_i^*$ is nef.
\item[(c.3)] The cone of curves $NE(Z)$ is generated by the classes in $\mathrm{Pic}_{\mathbb{Q}}(Z)$ of the divisors $\tilde{F}_1,\tilde{M}_0,\tilde{M}_1,E_1,E_2,\ldots, E_n$.
\end{itemize}
\end{itemize}
\end{theorem}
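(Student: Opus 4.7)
The three sets of equivalences in (a), (b), (c) follow a common blueprint, so I would describe the plan for part (a); parts (b) and (c) amount to repeating it with the appropriate replacement of the line at infinity by the two components $F_1 \cup M_0$ and $F_1 \cup M_1$, respectively, and with the pair $F^*, M^*$ playing the role held by the single class $E_0^*$ in the projective case. The starting observation is an intersection-theoretic computation on $Z$: the proximity equalities among the values $\nu(\mathfrak{m}_i)$ translate into the identity $D_\nu \cdot E_i = 0$ for every $i \in \{1, \ldots, n\}$, once the basis $\{E_0^*, E_1^*, \ldots, E_n^*\}$ of $\mathrm{Pic}_\mathbb{Q}(Z)$ is taken adapted to the intersection form (so $E_0^*$ pulls back the class of a line and $\{E_i^*\}$ is dual to $\{E_i\}$). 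Consequently, nefness of $D_\nu$ is equivalent to $D_\nu \cdot \tilde{C} \geq 0$ for every irreducible curve $C \subset \mathbb{P}^2$.

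For $(a.1) \Leftrightarrow (a.2)$, I would translate both conditions into statements about such curves. Any $f \in \mathcal{O}_{\mathbb{P}^2}(\mathbb{P}^2 \setminus L)$ writes uniquely as $F/L^d$ with $F$ a homogeneous polynomial of degree $d$ defining a curve $C$ not containing $L$, so that $\nu(f) = \nu(\varphi_C) - d\,(\varphi_L, \varphi_n)_p$. On the other hand, expanding $[\tilde{C}]$ in the intersection form and using the standard computation of $\nu(\varphi_C)$ as a sum of multiplicities of $C$ at the infinitely near points weighted by the $\nu(\mathfrak{m}_i)$, one obtains $D_\nu \cdot \tilde{C} = d\,(\varphi_L, \varphi_n)_p - \nu(\varphi_C)$. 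Matching signs yields the equivalence, the excluded case $C = L$ being handled directly.

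The easy direction $(a.3) \Rightarrow (a.2)$ is a short list of routine checks against the stated generators, using $D_\nu \cdot E_i = 0$ and $D_\nu \cdot \tilde{L} = (\varphi_L, \varphi_n)_p \geq 0$. The converse $(a.2) \Rightarrow (a.3)$ is the technical heart of the proof. The plan is to take an irreducible curve $C$ on $Z$ whose class is not among $\{\tilde{L}, E_1, \ldots, E_n\}$, write $[C]$ as a rational combination of these generators by means of the decomposition $[C] = \deg(\pi(C))\,\tilde{L} - \sum_i m_i(\pi(C))\,\bar{E}_i$ followed by the change of basis from total to strict transforms, and force all coefficients to be non-negative. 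The coefficient of $\tilde{L}$ equals $\deg \pi(C) \geq 0$; the coefficients of the $E_i$ are then controlled by applying the nef inequality $D_\nu \cdot C \geq 0$ together with the explicit form of $D_\nu$ and the inductive structure of the proximity matrix.

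The main obstacle is exactly this last step: a priori, the coefficients of the $E_i$ in the decomposition of $[C]$ are integers of unknown sign, and pinning them down requires the full strength of the interplay between the nefness of $D_\nu$ and the combinatorics of the configuration $\mathcal{C}_\nu$. For parts (b) and (c), the role of $\tilde{L}$ is shared between $\tilde{F}_1, \tilde{M}_0$ (resp.\ $\tilde{F}_1, \tilde{M}_0, \tilde{M}_1$), and the non-special hypothesis in (c) is used precisely to guarantee that $M_1$ is available as an effective section whose strict transform on $Z$ has negative self-intersection, so that it can serve as an extra generator of $\mathrm{NE}(Z)$; with this in place the rest of the scheme carries over unchanged.
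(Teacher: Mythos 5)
First, note that the paper does not prove Theorem \ref{teo33} at all: it is quoted from \cite{GalMon,GalMonMor}, so your attempt has to be measured against the arguments in those references. Against that standard, your outline secures only the easy parts of item (a). The translation $(a.1)\Leftrightarrow(a.2)$ via $f=F/L^{d}$, $\nu(f)=\nu(\varphi_C)-d\,(\varphi_L,\varphi_n)_p$ and $D_\nu\cdot\tilde C=d\,(\varphi_L,\varphi_n)_p-\nu(\varphi_C)$ is indeed the standard argument (modulo the sign convention for the $E_i^*$; also $D_\nu\cdot E_i=0$ only for $i<n$, while $D_\nu\cdot E_n=1$ --- harmless for nefness, but your blanket claim is off), and $(a.3)\Rightarrow(a.2)$ is routine. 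The genuine gap is exactly the implication you yourself flag as ``the main obstacle'', namely $(a.2)\Rightarrow(a.3)$: you never supply the argument, and the plan you sketch cannot work as stated. Expanding the class of an integral curve $C$ in the basis $\{\tilde L,E_1,\ldots,E_n\}$ and trying to force all $n+1$ coefficients to be non-negative from the single linear inequality $D_\nu\cdot C\ge 0$ (plus proximity) is hopeless: one inequality does not control the signs of $n+1$ coefficients, which in general are sums of terms of both signs. The proofs in \cite{GalMon,GalMonMor} are dual in nature: since the cone $P$ spanned by $\tilde L,E_1,\ldots,E_n$ is a full-dimensional simplicial subcone of $\mathrm{NE}(Z)$, the equality $\mathrm{NE}(Z)=P$ is equivalent to every generator of the dual cone $P^{\vee}$ being nef, and one must exhibit and prove nef an explicit finite family of divisors generating $P^{\vee}$ --- essentially $E_0^*$ together with the divisors attached to all the intermediate valuations $\nu_i$, $1\le i\le n$, not just the single divisor of $(a.2)$. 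Producing that family and proving its nefness from the NPI hypothesis is the substantive content, and it is missing from your proposal.

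A second, smaller, gap is the claim that (b) and (c) ``carry over unchanged''. In (b) one must describe $\mathcal{O}_{\mathbb{F}_\delta}\bigl(\mathbb{F}_\delta\setminus(F_1\cup M_0)\bigr)$ (a bigraded fraction ring, so two degrees and the two intersection numbers $(\varphi_{F_1},\varphi_n)_p$, $(\varphi_{M_0},\varphi_n)_p$ enter the computation of $\nu(f)$), and in (c) the non-special hypothesis is needed to guarantee that $M_1$ exists and is an integral curve of $|M|$ whose strict transform has negative self-intersection; moreover the list in $(c.3)$ contains $n+3$ classes ($\tilde F_1,\tilde M_0,\tilde M_1,E_1,\ldots,E_n$) in a Picard group of rank $n+2$, so the cone is no longer simplicial and the duality step must be modified, not merely transcribed. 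None of this follows by pure analogy with the projective case, so even granting part (a) your treatment of (b) and (c) remains a sketch rather than a proof.
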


\section{Discrete equivalence of NPI valuations}\label{Sect_Tres}

Dual graphs of plane valuations are key objects for classifying them. NPI valuations are a subclass of plane valuations encoding  interesting global geometric information. We think it is interesting to decide which are the dual graphs of plane valuations admitting NPI valuations. Being more explicit, given a dual graph $\Gamma$  corresponding to a divisorial or irrational valuation, we desire to know if there is some NPI valuation (and which is its type) whose dual graph is $\Gamma$.

From our description in Subsection \ref{puis}, it is clear that the sequence of Puiseux exponents of a valuation is an equivalent datum to its dual graph which is simpler to manipulate. This gives a meaning to the following definition.

\begin{definition}
\label{Def_dicrete_class}
{\rm
Let $\nu$ be a divisorial or irrational valuation and $\{\beta_j'(\nu)\}_{j=0}^{g+1}$ its sequence of Puiseux exponents. The \emph{discrete class} of $\nu$ is the tuple $$\mathbf{t}_\nu=(g,\beta_0'(\nu),\beta_1'(\nu),\ldots, \beta_{g+1}'(\nu)).$$

Two valuations as above having the same discrete class are called {\it discretely equivalent}.}
\end{definition}
The following result is immediate.
\begin{proposition}
Let $\nu$ and $\nu'$ be two divisorial or irrational valuations. Then the dual graphs of $\nu$ and $\nu'$ coincide if and only if $\nu$ and $\nu'$ have the same discrete class.
\end{proposition}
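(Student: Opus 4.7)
The proposition is essentially a bookkeeping statement linking two encodings of the same combinatorial data, so the plan is to show each direction by tracing the definition of the Puiseux exponents through the dual graph.

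First, I would argue the forward implication ($\Gamma_\nu = \Gamma_{\nu'}$ implies $\mathbf{t}_\nu = \mathbf{t}_{\nu'}$) directly from the recipe in Subsection~\ref{puis}. The integer $g$ is read off the graph as the number of vertices of degree $3$ (equivalently, the number of non-tail subgraphs $\Gamma_\nu^j$ in Figure~\ref{Fig_dualgraphdivisorial}), so it agrees for $\nu$ and $\nu'$. For each $j \in \{1,\ldots,g+1\}$, the entries $a_k^j$ of the continued fraction defining $\beta_j'(\nu)$ count vertices in $\Gamma_\nu^j$ grouped by constancy of $\nu(\mathfrak{m}_i)$; the key observation I would make explicit is that this grouping is itself combinatorial, since consecutive vertices share the same value $\nu(\mathfrak{m}_i)$ exactly when the corresponding points $p_i$ are successively proximate to a common earlier point in the same branch, and the proximity relations are encoded in the dual graph. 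Hence identical labelled trees yield identical sequences $(a_1^j,\ldots,a_{r_j}^j)$, and therefore identical Puiseux exponents, so $\mathbf{t}_\nu = \mathbf{t}_{\nu'}$. Note $\beta_0'(\nu)=1$ is also tautologically in the tuple.

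For the converse, I would reverse the construction: from $\mathbf{t}_\nu$ one recovers $g$, and for each $j$ one expands $\beta_j'(\nu)$ as a continued fraction $\langle a_1^j;a_2^j,\ldots,a_{r_j}^j\rangle$. Here a uniqueness remark is needed: for $0<j<g+1$ one has $\beta_j'\in \mathbb{Q}\setminus\mathbb{Z}$ with a canonical Hirzebruch--Jung-type normalization (the convention implicit in Spivakovsky's counting of vertices), and for $j=g+1$ either $\beta_{g+1}'$ is a positive integer (divisorial case, so $\Gamma_\nu^{g+1}$ is the finite tail of the appropriate length) or $\beta_{g+1}'$ is irrational (irrational case, with $r_{g+1}=\infty$ giving a unique expansion). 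In either case the entries $a_k^j$ are determined, and they dictate how many vertices go into $\Gamma_\nu^j$ and how the satellite/free pattern is arranged within it. Assembling the $g+1$ subgraphs along the main chain via the labels $st_j$, $\ell_j$ yields a labelled tree that is forced to coincide, as a labelled tree, for any two valuations with the same $\mathbf{t}$.

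The only real subtlety, and the step I would write out most carefully, is the justification of the previous paragraph's uniqueness statement, namely that the combinatorial grouping of the $a_k^j$ really is recoverable from the rational (or real) number $\beta_j'$ alone. This reduces to invoking Spivakovsky's normalization of the Puiseux continued fraction, under which the expansion is unique and corresponds bijectively to the arrangement of vertices in $\Gamma_\nu^j$ with common value of $\nu(\mathfrak{m}_i)$. Once this is accepted, both implications are purely combinatorial and the proposition follows immediately, justifying the paper's remark that the result is \emph{immediate}.
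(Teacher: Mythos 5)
Your proposal is correct and is precisely the bookkeeping argument the paper has in mind when it labels this proposition ``immediate'' after defining the Puiseux exponents directly from the combinatorics of the subgraphs $\Gamma_\nu^j$ in Subsection~\ref{puis}. The one small imprecision is your parenthetical claim that $g$ equals the number of degree-$3$ vertices: in the degenerate case $st_g = \ell_{g+1}$ that vertex has degree $2$, so there are only $g-1$ degree-$3$ vertices, but your alternative characterization via the number of non-tail subgraphs is correct and the argument is unaffected.
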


Now we give a lemma we will use for characterizing the discrete classes of NPI valuations.

\begin{lemma}
\label{Lemma_Equality_betabarragmas1}
Keep the notation as in Section \ref{Sect_Dos}.
Let $\nu$ be a divisorial or irrational valuation of $Z_0$ and $\mathbf{t}_\nu=(g,\beta_0'(\nu),$ $\beta_1'(\nu),\ldots,$ $\beta_{g+1}'(\nu))$ its discrete class. Then the last maximal contact value of $\nu$ satisfies
$$
\overline{\beta}_{g+1}(\nu)=\sum_{j=1}^g e_j(\nu)^2(\beta_{j+1}'(\nu)-1) + \overline{\beta}_0(\nu)\overline{\beta}_1(\nu),
$$
where, by convention, $\sum_{j=1}^g e_j(\nu)^2(\beta_{j+1}'(\nu)-1)=0$ when $g=0$.
\end{lemma}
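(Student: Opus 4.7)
The statement is a closed-form for the top maximal contact value in terms of the Puiseux exponents and the $e_j(\nu)$'s, and the natural approach is to unfold the defining recurrence (\ref{Cond_betabarraenbetaprima}) step by step.

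The plan is as follows. First I would handle the degenerate case $g=0$ directly: here the sum is empty by convention, $e_0(\nu)=1=\overline{\beta}_0(\nu)$, and the recurrence gives $\overline{\beta}_1(\nu)=e_0(\nu)(\beta_1'(\nu)-1)+n_0(\nu)\overline{\beta}_0(\nu)=\beta_1'(\nu)=\overline{\beta}_0(\nu)\overline{\beta}_1(\nu)$, which is the desired formula.

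For $g\geq 1$, the key observation is the telescoping identity $e_j(\nu)\,n_j(\nu)=e_{j-1}(\nu)$ for $1\leq j\leq g$, which is just the definition of $n_j(\nu)$. Using this, I would prove by descending induction on $k$ the intermediate identity
\begin{equation*}
\overline{\beta}_{g+1}(\nu)=\sum_{j=k}^{g} e_j(\nu)^2\bigl(\beta_{j+1}'(\nu)-1\bigr)+e_{k-1}(\nu)\,\overline{\beta}_k(\nu),\qquad 1\leq k\leq g.
\end{equation*}
The base case $k=g$ is exactly (\ref{Cond_betabarraenbetaprima}) with $j=g$ after using $e_g(\nu)=1$ and $n_g(\nu)=e_{g-1}(\nu)$. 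For the inductive step from $k$ to $k-1$, I apply (\ref{Cond_betabarraenbetaprima}) with $j=k-1$ to expand $\overline{\beta}_k(\nu)=e_{k-1}(\nu)(\beta_k'(\nu)-1)+n_{k-1}(\nu)\overline{\beta}_{k-1}(\nu)$, multiply by $e_{k-1}(\nu)$, and collapse $e_{k-1}(\nu)\,n_{k-1}(\nu)=e_{k-2}(\nu)$ to extract the new term $e_{k-1}(\nu)^2(\beta_k'(\nu)-1)$ into the sum and leave $e_{k-2}(\nu)\overline{\beta}_{k-1}(\nu)$ as the remainder.

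Finally, I take $k=1$ in the intermediate identity and use $e_0(\nu)=\overline{\beta}_0(\nu)$ (which follows from the definitions $\overline{\beta}_0(\nu):=e_0(\nu)$) to conclude
\begin{equation*}
\overline{\beta}_{g+1}(\nu)=\sum_{j=1}^{g} e_j(\nu)^2\bigl(\beta_{j+1}'(\nu)-1\bigr)+\overline{\beta}_0(\nu)\,\overline{\beta}_1(\nu).
\end{equation*}
No real obstacle is expected; the proof is a bookkeeping exercise in telescoping the recurrence, and the only mild point is keeping track of the conventions $e_g(\nu)=1$, $n_0(\nu)=1$, and $e_0(\nu)=\overline{\beta}_0(\nu)$ at the two boundaries of the sum.
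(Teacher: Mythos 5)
Your proof is correct and follows essentially the same route as the paper's: both unwind the recurrence \eqref{Cond_betabarraenbetaprima}, multiply through by $e_j(\nu)$ so that $e_j(\nu)n_j(\nu)=e_{j-1}(\nu)$ telescopes the terms from $j=g$ down to $j=1$, and finish with $e_g(\nu)=1$ and $e_0(\nu)=\overline{\beta}_0(\nu)$. Your phrasing as a descending induction just makes explicit the telescoping the paper states directly.
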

\begin{proof}
The case $g=0$ is obvious. Assume $g >0$, by Equality  \eqref{Cond_betabarraenbetaprima} and since $e_g(\nu)=\text{gcd}(\overline{\beta}_0(\nu),\overline{\beta}_1(\nu),\ldots,\overline{\beta}_g(\nu))=1$, it holds that
$$
\overline{\beta}_{g+1}(\nu) = e_g(\nu)^2(\beta_{g+1}'(\nu)-1)+e_{g-1}(\nu)\overline{\beta}_g(\nu).
$$
Now, again by \eqref{Cond_betabarraenbetaprima}, we obtain that
$$
e_j(\nu)\overline{\beta}_{j+1}(\nu)=e_j(\nu)^2(\beta_{j+1}'(\nu)-1) + e_{j-1}(\nu)\overline{\beta}_{j}(\nu),
$$
for $1\leq j \leq g-1$. This concludes the proof after noticing that $e_0 (\nu) =  \overline{\beta}_{0} (\nu)$.
\end{proof}

The discrete classes of divisorial valuations are tuples $\mathbf{t}= (g,\beta_0',\beta_1',\ldots,\beta_{g+1}')$, where $g$ is a nonnegative integer,  $\beta_{g+1}'$ a positive integer, $\beta'_0=1$  and (when $g \neq 0$) $\beta_j' \in \mathbb{Q}_{>0} \setminus \mathbb{Z}$, $1 \leq j\leq g$, $\mathbb{Q}_{>0}$ being the positive rational numbers, which, for simplicity, we write as
$\beta_j'=q_j/p_j$ where gcd$(p_j,q_j)=1$. Furthermore for each $\mathbf{t}$, we set $e_g=1$ and in case $g \neq 0$, $e_j=\prod_{k=j+1}^g p_k$, $0\leq j \leq g-1,$ and $w_j=e_j/e_0$, for  $0\leq j\leq g$. When $\nu$ is an irrational valuation, its discrete class is defined analogously but $\beta_{g+1}'$ is an irrational number.

The set of tuples $\mathbf{t}$ as above is denoted by $\mathbb{T}$. Our next result uses the sum $\sum_{j=1}^g w_j^2(\beta_{j+1}'-1)$ which, by convention, is defined to be zero whenever $g=0$.

\begin{theorem}
\label{Th_CondNumeric_divisorial}
Fix a class $\mathbf{t} \in \mathbb{T}$. Then
\begin{itemize}
\item[(a)] The tuple $\mathbf{t}$ is the discrete class of some NPI divisorial or irrational valuation of $\mathbb{P}^2$ if, and only if, the following equality holds:
$$
\beta_1'(\beta_1'-1)\geq \sum_{j=1}^g w_j^2(\beta_{j+1}'-1).
$$

\item[(b)] The tuple $\mathbf{t}$ is the discrete class of some NPI special divisorial or irrational valuation of $\mathbb{F}_\delta$ if, and only if, $\delta$ is a non-negative integer and
$$
\beta_1'(\delta\beta_1'+1)\geq \sum_{j=1}^g w_j^2(\beta_{j+1}'-1).
$$

\item[(c)]The tuple $\mathbf{t}$ is the discrete class of some  NPI non-special divisorial or irrational valuation of $\mathbb{F}_\delta$ if, and only if, $\delta$ is a positive integer and
$$
\beta_1'-\delta\geq \sum_{j=1}^g w_j^2(\beta_{j+1}'-1).
$$

\end{itemize}
\end{theorem}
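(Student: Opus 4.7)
The plan is to combine Theorem \ref{teo33}, which rephrases NPI as nefness of a distinguished divisor $D_\nu$ on the surface $Z$ defined by the sequence of blow-ups, with the combinatorial identity of Lemma \ref{Lemma_Equality_betabarragmas1}. First I would reduce to the divisorial case: by Definition \ref{def32}, an irrational NPI valuation of a given type is a limit of normalized divisorial NPI valuations of the same type; since the Puiseux exponents (hence the auxiliary quantities $e_j,w_j$) of $\nu_i^N$ converge to those of $\nu$, and each of the displayed inequalities is closed, the irrational case follows from the divisorial one by passage to the limit.

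For the divisorial case, I would work on $Z=Z_n$ using the orthogonal basis $\{E_0^*,E_1^*,\ldots,E_n^*\}$ of $\mathrm{Pic}_\QQ(Z)$ in the projective case (where $(E_0^*)^2=1$, $(E_i^*)^2=-1$, and off-diagonal pairings vanish) and the analogous basis $\{F^*,M^*,E_1^*,\ldots,E_n^*\}$ in the Hirzebruch cases (with $(F^*)^2=0$, $(M^*)^2=\delta$, $F^*\cdot M^*=1$). Two identities are central: Noether's formula applied to the curvette $\varphi_n$ yields $\sum_{i=1}^{n}\nu(\mathfrak{m}_i)^2=\overline{\beta}_{g+1}(\nu)$, and the intersection multiplicities of the curves at infinity with $\varphi_n$ can be read off in terms of $\overline{\beta}_0(\nu)$ and $\overline{\beta}_1(\nu)$ by arranging the initial free points of $\Gamma_\nu^1$ to lie on the strict transforms of the maximal-contact curve at infinity. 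Substituting into $D_\nu^2$ and imposing $D_\nu^2\geq 0$ produces, after routine algebra, an inequality of the form $f_\delta(\overline{\beta}_0,\overline{\beta}_1)\geq \overline{\beta}_{g+1}$ with the right-hand expression specific to each case. Applying Lemma \ref{Lemma_Equality_betabarragmas1} to rewrite $\overline{\beta}_{g+1}=\overline{\beta}_0\overline{\beta}_1+\sum_{j=1}^{g}e_j^2(\beta_{j+1}'-1)$, then dividing by $\overline{\beta}_0^2$ and using $\beta_1'=\overline{\beta}_1/\overline{\beta}_0$ and $w_j=e_j/\overline{\beta}_0$, transforms this into the Puiseux-exponent inequality stated in (a), (b) or (c).

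For the full nef condition, beyond $D_\nu^2\geq 0$ one must verify $D_\nu\cdot C\geq 0$ for each generator $C$ of the cone of curves listed in Theorem \ref{teo33}(a.3), (b.3), (c.3). Intersections with the exceptional components $E_i$ follow from the proximity equalities satisfied by $\{\nu(\mathfrak{m}_i)\}$ for the curvette $\varphi_n$, and intersections with $\tilde{L},\tilde{F}_1,\tilde{M}_0$ (and $\tilde{M}_1$ in case (c)) are automatic from the choice of embedding. I expect the principal obstacle to lie in the converse direction in case (c): given the inequality, one must construct a configuration on some $\mathbb{F}_\delta$ realizing the discrete class and a section $M_1\in|M|$ with $\tilde{M}_1^2<0$. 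This forces a combinatorial counting constraint on the free points of $\Gamma_\nu^1$ relative to $\delta$, and matching it cleanly to the algebraic inequality---particularly at the boundary of the inequality and in the non-special case where $M_1$, rather than $F_1$, plays the role of the first maximal-contact curve---requires a careful induction on the structure of the dual graph.
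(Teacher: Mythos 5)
Your overall strategy---reduce to the divisorial case, work with the nefness criterion of Theorem~\ref{teo33}, exploit Noether's identity $\sum_{i}\nu(\mathfrak{m}_i)^2 = \overline{\beta}_{g+1}$, and then convert the inequality on maximal contact values into the Puiseux-exponent form via Lemma~\ref{Lemma_Equality_betabarragmas1}---is the right skeleton, and the algebra $D_\nu^2\geq 0 \Leftrightarrow (\varphi_L,\varphi_n)_p^2\geq\overline{\beta}_{g+1}$ followed by division by $\overline{\beta}_0^2$ is exactly how the final inequality is produced. However, there is a genuine logical gap in the step from $D_\nu^2\geq 0$ to $D_\nu$ nef: you propose to verify $D_\nu\cdot C\geq 0$ over the generators of $\mathrm{NE}(Z)$ listed in (a.3)/(b.3)/(c.3), but that description of the cone is itself \emph{equivalent} to the NPI condition you are trying to establish. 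For an arbitrary valuation satisfying only the numerical inequality you have no a priori control on $\mathrm{NE}(Z)$; additional irreducible curves of negative self-intersection not on your list could exist, and then $D_\nu^2\geq 0$ would not suffice for nefness. So the converse direction as written is circular, and this is not an issue confined to case (c) as you guess---it affects all three cases.

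The paper sidesteps this entirely: it does not rederive nefness from $D_\nu^2\geq 0$. Instead it invokes \cite[Theorem~1]{GalMon} and \cite[Theorems~3.6, 4.8]{GalMonMor}, which already state the exact equivalence of NPI with the single intersection-multiplicity inequality (e.g., $(\varphi_L,\varphi_n)_p^2\geq\overline{\beta}_{g+1}$ in the projective case). Given those references, the remaining work is (i) the forward bound $(\varphi_L,\varphi_n)_p\leq\overline{\beta}_1$, coming from the case analysis on whether the strict transform of the curve at infinity passes through the initial free points (yielding $(\varphi_L,\varphi_n)_p=\overline{\beta}_1$ or $s_L\overline{\beta}_0$ with $1\leq s_L\leq\lfloor\overline{\beta}_1/\overline{\beta}_0\rfloor$), and (ii) the converse realization, where one chooses the configuration so that the initial free points lie on $L$ (resp.\ $F_1$, resp.\ a section), forcing $(\varphi_L,\varphi_n)_p=\overline{\beta}_1$. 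Your sketch only addresses (ii) via the ``arranging the initial free points'' remark but omits (i), and in case (c) the role of $(\varphi_{F_1},\varphi_n)_p$ and $(\varphi_{M_1},\varphi_n)_p$ is not symmetric ($(\varphi_{F_1},\varphi_n)_p=\overline{\beta}_0$ always, while $(\varphi_{M_1},\varphi_n)_p$ ranges over $s_{M_1}\overline{\beta}_0$ with $\delta+1\leq s_{M_1}\leq\lfloor\overline{\beta}_1/\overline{\beta}_0\rfloor$ or equals $\overline{\beta}_1$); this asymmetry, not an induction on the dual graph, is what drives case (c). In short: if you quote the equivalence theorems from \cite{GalMon,GalMonMor} as the paper does, the proof is short and your remaining reasoning is sound; if you insist on deducing nefness from $D_\nu^2\geq 0$ alone, you need an independent argument for the cone of curves, which is precisely the content of those cited theorems and not ``routine algebra.''
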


\begin{proof}
It suffices to prove the result for the set of discrete classes of divisorial valuations. The result for irrational valuations follows from \cite[Theorem 6.1]{Gal}.

We assume that $g \neq 0$ because the case $g=0$ can be deduced reasoning analogously. Set $\mathbf{t}= (g,\beta_0',\beta_1',\ldots,\beta_{g+1}')$ a discrete class where $\beta_{g+1}'$ is an integer number. Let $\overline{\beta}_0, \overline{\beta}_1, \overline{\beta}_{g+1}$ be the values computed from the components of $\mathbf{t}$ following the the formulas given in Equality \eqref{Cond_betabarraenbetaprima} and Lemma  \ref{Lemma_Equality_betabarragmas1}.

We are going to prove that there exists an NPI divisorial valuation of $\mathbb{P}^2$ whose discrete class is $\mathbf{t}$ if and only if $\overline{\beta}_1^2\geq\overline{\beta}_{g+1}.$ This will prove Statement (a)  because there is an equivalence between the above inequality and that of Statement (a), which follows from the equality  $\beta'_1=\overline{\beta}_1/\overline{\beta}_0$ and Lemma \ref{Lemma_Equality_betabarragmas1}.

Assume first that $\nu$ is an NPI divisorial valuation of $\mathbb{P}^2$ whose discrete class is $\mathbf{t}$. With the notation used before Theorem \ref{teo33}, by \cite[Theorem 1]{GalMon}, it holds that
$$
(\varphi_L,\varphi_n)_p^2\geq \overline{\beta}_{g+1}.
$$
Now $(\varphi_L,\varphi_n)_p=\overline{\beta}_1$ holds
when the strict transform of $L$ goes through all initial free points in $\mathcal{C}_\nu$. Otherwise, $(\varphi_L,\varphi_n)_p=s_L\overline{\beta}_0,$ where $1\leq s_L\leq \lfloor\overline{\beta}_1/\overline{\beta}_0\rfloor$. As a consequence one obtains that $\overline{\beta}_1^2\geq \overline{\beta}_{g+1}$ and this implication is proved. Conversely, if we take a discrete class $\mathbf{t} \in \mathbb{T}  $  (where $\beta_{g+1}'$ is a positive integer number) such that $\overline{\beta}_1^2\geq \overline{\beta}_{g+1}$ it is sufficient to consider any divisorial valuation of $\mathbb{P}^2$ whose discrete class is $\mathbf{t}$ and whose first free points in $\mathcal{C}_\nu$ are on the  projective line $L$ and its strict transforms. Therefore the equality $(\varphi_L,\varphi_n)_p=\overline{\beta}_{1}$ is satisfied and the proof is concluded by \cite[Theorem 1]{GalMon}.

As in the proof of Item (a), for proving the equivalence stated in Item (b) it suffices to replace the inequality given there with $$2\overline{\beta}_1\overline{\beta}_0+\delta\overline{\beta}_1^2\geq \overline{\beta}_{g+1}.$$
 Now assume that $\mathbf{t}$ is the discrete class of a special divisorial valuation of $\mathbb{F}_\delta$. With notation as in Theorem \ref{teo33}, by \cite[Theorem 3.6]{GalMonMor}, the inequality
\begin{equation}
\label{Delta}
2(\varphi_{F_1},\varphi_n)_p(\varphi_{M_0},\varphi_n)_p+ \delta(\varphi_{F_1},\varphi_n)_p^2\geq \overline{\beta}_{g+1}
\end{equation}
holds. Now $(\varphi_{F_1},\varphi_n)_p$ equals either $\overline{\beta}_1$  if the strict transform of $F_1$ goes through all initial free points in $\mathcal{C}_\nu$ or $s_{F_1}\overline{\beta}_0$, where $1\leq s_{F_1}\leq \lfloor\overline{\beta}_1/\overline{\beta}_0\rfloor,$ otherwise. The section $M_0$ has a similar behaviour and then $(\varphi_{M_0},\varphi_n)_p$ equals either $\overline{\beta}_1$ or $s_{M_0}\overline{\beta}_0$, where $0\leq s_{M_0}\leq \lfloor\overline{\beta}_1/\overline{\beta}_0\rfloor$. This proves that $2\overline{\beta}_1\overline{\beta}_0+\delta\overline{\beta}_1^2\geq \overline{\beta}_{g+1}$ because $F_1$ and $M_0$ go both through $p$ but at most one of their strict transforms passes through $p_2$. Conversely,  let $\mathbf{t} \in \mathbb{T}$ be ($\beta_{g+1}' \in \mathbb{Z}_{>0}$) such that the inequality $2\overline{\beta}_1\overline{\beta}_0+\delta\overline{\beta}_1^2\geq \overline{\beta}_{g+1}$ is satisfied, then considering the special divisorial valuation $\nu$ of $\mathbb{F}_\delta$ with discrete class $\mathbf{t}$ whose first free points in $\mathcal{C}_\nu$ coincide with those through which $F_1$ goes, by \cite[Theorem 3.6]{GalMonMor} one gets an NPI special divisorial valuation on $\mathbb{F}_\delta$ with discrete class $\mathbf{t}$.

Finally reasoning as above one can give a proof of Statement (c) which is supported in the following two facts. First, by \cite[Theorem 4.8]{GalMonMor} the inequality in the statement is equivalent to the following one
$$
2(\varphi_{F_1},\varphi_n)_p(\varphi_{M_1},\varphi_n)_p- \delta(\varphi_{F_1},\varphi_n)_p^2\geq \overline{\beta}_{g+1}.
$$
Second, the fiber $F_1$ and the section $M_1$ satisfy that $(\varphi_{F_1},\varphi_n)_p=\overline{\beta}_0$ and $(\varphi_{M_1},\varphi_n)_p$ is equal either to $\overline{\beta}_1$ or to $s_{M_1}\overline{\beta}_0,$ where $\delta +1\leq s_{M_1}\leq \lfloor\overline{\beta}_1/\overline{\beta}_0\rfloor$. This concludes the proof.

\end{proof}

Plane valuations are algebraic objects which are not linked to specific surfaces. Our next result explains the relation among the dual graphs of the different types of NPI valuations. Denote by $\mathfrak{C}_{\mathbb{P}^2}$ (res\-pectively, $\mathfrak{C}_{\mathbb{F}_\delta}^1$,  $\mathfrak{C}_{\mathbb{F}_\delta}^2$) the set of discrete classes in $\mathbb{T}$ admitting some NPI (respectively, NPI special, NPI non-special)  divisorial or irrational valuation of $\mathbb{P}^2$  (respectively, $\mathbb{F}_\delta$).

\begin{theorem}
\label{Thm_Comparacion_grafos}
The following inclusions among the above introduced sets of discrete classes hold.
\begin{itemize}
\item[(a)] $\mathfrak{C}_{\mathbb{P}^2} \subseteq \mathfrak{C}_{\mathbb{F}_\delta}^1$ for all $\delta >0$.
\item[(b)] Let $\mathfrak{C}_{\mathbb{P}^2}^{\leq 2}$ (respectively, $\mathfrak{C}_{\mathbb{P}^2}^{\geq 2}$) be the set of discrete classes in $\mathfrak{C}_{\mathbb{P}^2}$ whose coordinate $\beta_1'$ satisfies $\beta_1'\leq 2$ (respectively, $\beta_1'\geq 2$). Define analogously $\mathfrak{C}_{\mathbb{F}_0}^{1,\leq 2}$ and $\mathfrak{C}_{\mathbb{F}_0}^{1,\geq 2}$. Then
    $$
    \mathfrak{C}_{\mathbb{P}^2}^{\leq 2} \subseteq \mathfrak{C}_{\mathbb{F}_0}^{1,\leq 2} \;\; \mbox{and } \; \;\mathfrak{C}_{\mathbb{F}_0}^{1,\geq 2} \subseteq \mathfrak{C}_{\mathbb{P}^2}^{\geq 2}.
    $$
    \item[(c)] $\mathfrak{C}_{\mathbb{F}_\delta}^2\subseteq\mathfrak{C}_{\mathbb{P}^2}$ for all $\delta \geq 0$.

\end{itemize}
\end{theorem}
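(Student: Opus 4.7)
The plan is to derive each inclusion from the numerical characterization provided by Theorem \ref{Th_CondNumeric_divisorial}. The essential observation is that the three defining inequalities for $\mathfrak{C}_{\mathbb{P}^2}$, $\mathfrak{C}_{\mathbb{F}_\delta}^{1}$ and $\mathfrak{C}_{\mathbb{F}_\delta}^{2}$ share the same right-hand side
$$
S:=\sum_{j=1}^{g} w_j^2(\beta_{j+1}'-1)\ge 0,
$$
and differ only in their left-hand sides, which I abbreviate
$$
A_1:=\beta_1'(\beta_1'-1),\qquad A_2(\delta):=\beta_1'(\delta\beta_1'+1),\qquad A_3(\delta):=\beta_1'-\delta.
$$
Thus each of the three inclusions will reduce to a pointwise inequality between two of these expressions together with the restriction on $\delta$ proper to each case.

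For item (a) I would compute $A_2(\delta)-A_1=\beta_1'\bigl((\delta-1)\beta_1'+2\bigr)$, which is strictly positive for every $\delta\ge 1$ since $\beta_1'>0$. So the condition $A_1\ge S$ defining $\mathfrak{C}_{\mathbb{P}^2}$ forces $A_2(\delta)\ge S$, giving the inclusion into $\mathfrak{C}_{\mathbb{F}_\delta}^{1}$.

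For item (b) I set $\delta=0$, which collapses the defining inequality of $\mathfrak{C}_{\mathbb{F}_0}^{1}$ to $\beta_1'\ge S$. The elementary identity $A_1-\beta_1'=\beta_1'(\beta_1'-2)$ shows that $A_1\le\beta_1'$ whenever $\beta_1'\le 2$ and $A_1\ge\beta_1'$ whenever $\beta_1'\ge 2$, yielding both stated inclusions at once.

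For item (c) I first note that a non-special valuation requires $\delta\ge 1$, so $\mathfrak{C}_{\mathbb{F}_0}^{2}=\emptyset$ and the case $\delta=0$ is vacuous. For $\delta\ge 1$ the identity
$$
A_1-A_3(\delta)=(\beta_1'-1)^2+(\delta-1)\ge 0
$$
gives $A_3(\delta)\ge S\Rightarrow A_1\ge S$, completing the argument. Since all the substantive work is already in Theorem \ref{Th_CondNumeric_divisorial}, no serious obstacle remains; the only points requiring care are the admissible range of $\delta$ in each family and the threshold $\beta_1'=2$ that governs item (b).
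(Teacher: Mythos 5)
Your proposal is correct and follows essentially the same route as the paper: both reduce each inclusion, via the numerical characterization of Theorem \ref{Th_CondNumeric_divisorial}, to comparing the three left-hand sides pairwise (the paper phrases this through the equivalent inequalities $\beta_1'^2\geq \overline{\beta}_{g+1}/\overline{\beta}_0^2$, $2\beta_1'+\delta\beta_1'^2\geq \overline{\beta}_{g+1}/\overline{\beta}_0^2$ and $2\beta_1'-\delta\geq \overline{\beta}_{g+1}/\overline{\beta}_0^2$, which is only a cosmetic reformulation of your $A_1,A_2(\delta),A_3(\delta)$ versus $S$). The key comparisons you use — $A_2(\delta)\geq A_1$ for $\delta\geq 1$, the threshold $\beta_1'=2$ for item (b), and $\beta_1'^2\geq 2\beta_1'-\delta$ for item (c) — are exactly those in the paper's argument.
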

\begin{proof}
By the proof of Theorem \ref{Th_CondNumeric_divisorial}, for any $\delta >0$ it holds that $\mathbf{t} \in\mathfrak{C}_{\mathbb{P}^2}$ (respectively, $\mathbf{t}\in\mathfrak{C}^1_{\mathbb{F}_\delta}$) if and only if $\beta_1'^2\geq \overline{\beta}_{g+1}/\overline{\beta}_0^2$ (respectively, $2\beta_1'+\delta\beta_1'^2\geq \overline{\beta}_{g+1}/\overline{\beta}_0^2$). The fact that assuming $\beta_1'^2\geq \overline{\beta}_{g+1}/\overline{\beta}_0^2$, the inequality $2\beta_1'+\delta\beta_1'^2\geq \beta_1'^2$ is true if $\delta$ is a positive integer and also when $\delta=0$ and $\beta_1'(\nu)\leq 2$ proves (a) and the first inclusion in (b).

 The second inclusion in (b) follows from an analogous argument which leads to the opposite inequality.

To conclude, again by Theorem \ref{Th_CondNumeric_divisorial}, $\mathbf{t}$ belongs to $\mathfrak{C}_{\mathbb{F}_\delta}^2$, $\delta >0$, if and only if $2\beta_1'-\delta\geq \overline{\beta}_{g+1}/\overline{\beta}_{0}^2$. Then, the fact that $\beta_1'^2\geq 2\beta_1'-\delta$ completes the proof.
\end{proof}

\begin{remark}
{\rm
The inclusions proved in the above theorem are strict. The following examples show it.

(a) Consider the discrete class $\mathbf{t}=(2,1,4/3,17/3,1)$. Then $e_0=9,e_1=3$ and $e_2=1$. In addition,
$$
\beta_1'(\beta_1'-1)=\dfrac{12}{27} \text{ and } \sum_{i=1}^g w_i^2(\beta_{i+1}'(\nu)-1) = \dfrac{14}{27},
$$
which proves that $\mathbf{t} \not \in \mathfrak{C}_{\mathbb{P}^2}$. Now,
$$\beta_1'(\delta\beta_1'+1)= \frac{48\delta + 36}{27},$$
shows that $ \mathbf{t }\in\mathfrak{C}_{\mathbb{F}_\delta}^1$ for all non-negative integer $\delta$. Therefore the inclusion in Theorem \ref{Thm_Comparacion_grafos} (a) is strict.
\medskip

(b) Assume $\delta=0$ and set $\mathbf{t}=(3,1,7/3,43/2,14/3,1)$. Then $e_0=18,e_1=6,e_2=3$ and $e_3=1$ and one has
$$
\beta_1'=\dfrac{252}{108}, \;\;  \sum_{j=1}^g w_j^2(\beta_{j+1}'-1) = \dfrac{257}{108} \text{ and } \beta_1'(\beta_1'-1)=\dfrac{336}{108},
$$
which proves that $\mathbf{t} \in \mathfrak{C}_{\mathbb{P}^2}^{\geq 2}$ but $\mathbf{t}   \not \in \mathfrak{C}_{\mathbb{F}_0}^{1,\geq 2}$.
\medskip

(c)  Finally let $\mathbf{t}=(2,1,5/2,57/5,\Phi)$, where $\Phi$ denotes the golden ratio. One has $e_0=10,e_1=5$ and $e_2=1$. Then
$$
\beta_1'-\delta=\dfrac{5}{2} -\delta \text{ and } \sum_{j=1}^g w_j^2(\beta_{j+1}'-1)= \dfrac{52}{20}+ \dfrac{1}{100}(\Phi-1).
$$
This proves that $\mathbf{t} \not \in \mathfrak{C}_{\mathbb{F}_\delta}^2$ for any positive integer $\delta$. However $\mathbf{t}  \in \mathfrak{C}_{\mathbb{P}^2}$  because $\beta_1'(\beta_1'-1)= 75/20$.
}
\end{remark}
\section{Generating dual graphs of NPI valuations}\label{Sect_Cuatro}

The purpose of this section is to give an algorithmic procedure to provide the discrete classes (i.e., dual graphs) which admit NPI divisorial or irrational valuations. We assume that $g >0$ since any discrete class $(0,1, \beta_1')$ admits an NPI  (respectively, NPI special) valuation of $\mathbb{P}^2$ (respectively, $\mathbb{F}_\delta$) and by Theorem \ref{Th_CondNumeric_divisorial} the discrete classes  $(0,1, \beta_1' \geq \delta)$ are those that admit NPI non-special valuations of $\mathbb{F}_\delta$.

The {\it input} of our algorithm is a discrete class
\begin{equation}
\label{input1}
\mathbf{t}(\nu_I) :=(g,\beta_0'(\nu_I),\beta_1'(\nu_I),\ldots, \beta_{g+1}'(\nu_I)=1)
\end{equation}
belonging to one of the following sets: $\mathfrak{C}_{\mathbb{P}^2}$, $\mathfrak{C}_{\mathbb{F}_\delta}^1$ or  $\mathfrak{C}_{\mathbb{F}_\delta}^2$ generically denoted by $\mathfrak{C}$. It will give two outputs:

{\it Output 1,} which is a new discrete class in the same set $\mathfrak{C}$ of the input   with the shape:
\begin{multline}
\label{output1}
 \mathbf{t}(\nu_{O_1}) : =\big(g+1,\beta_0'(\nu_{O_1})= \beta_0'(\nu_I), \beta_1'(\nu_{O_1})=\beta_1'(\nu_I),\\
 \ldots,\beta_g'(\nu_{O_1})=\beta_g'(\nu_{I}), \beta_{g+1}'(\nu_{O_1}), \beta_{g+2}'(\nu_{O_1})=1\big).
\end{multline}

{\it Output 2.} It is in fact a double output. On the one hand a  new discrete class $\mathbf{t}(\nu_{O_2^1})$  in the same set $\mathfrak{C}$ of the input with the shape:
\begin{multline}
\label{output2}
 \mathbf{t}(\nu_{O_2^1}) :=\big(g,\beta_0'(\nu_{O_2^1})= \beta_0'(\nu_{O_1}), \beta_1'(\nu_{O_2^1})= \beta_1'(\nu_{O_1}),\\
 \ldots,\beta_g'(\nu_{O_2^1})= \beta_g'(\nu_{O_1}), \beta_{g+1}'(\nu_{O_2^1})\big),
\end{multline}
where $\beta_{g+1}'(\nu_{O_2^1}) \in \mathbb{R}_{>0} \setminus \mathbb{Q}_{>0}$.

And, on the other hand, a new discrete class $\mathbf{t}(\nu_{O_2^2})$ in the same set
$\mathfrak{C}$ of the input with the shape:

\begin{multline}
\label{output3}
 \mathbf{t}(\nu_{O_2^2}) :=\big(g+1, \beta_0'(\nu_{O_2^2})= \beta_0'(\nu_{O_1}) \beta_1'(\nu_{O_2^2})= \beta_1'(\nu_{O_1}),\\ \ldots, \beta_{g+1}'(\nu_{O_2^2})= \beta_{g+1}'(\nu_{O_1}), \beta_{g+2}'(\nu_{O_1^2})\big),
\end{multline}
where $\beta_{g+2}'(\nu_{O_1^2})$ is a positive integer different from 1.

The outputs are not unique but we can get infinitely many tuples as Output 1 and also in the first Output 2.

It is clear that the algorithm we are going to describe starts with a tuple corresponding to an NPI  divisorial valuation defined by a satellite divisor whose dual graph $\Gamma$ has $g$ subgraphs $\Gamma^j$. It provides dual graphs corresponding to the same type of NPI valuations which keep the subgraphs and add either a new subgraph  $\Gamma^{g+1}$ corresponding to a divisorial valuation defined by a satellite divisor or an irrational valuation. Even more, it also gives  dual graphs corresponding to NPI divisorial valuations defined by free divisors keeping the $g$ subgraphs $\Gamma^j$ and adding two new subgraphs $\Gamma^{g+1}$ and (a tail) $\Gamma^{g+2}$.

Notice that, starting with suitable tuples $(1,\beta_0'=1,\beta_1',1)$, we are able to provide the dual graph of any NPI valuation of any desired type with $g$ as large as we want.

Let us show the simple steps of our algorithm. With the the above notation, define
$$
q(\mathbf{t}(\nu)):=\left\lbrace
\begin{array}{cl}
\beta_1'(\nu)(\beta_1'(\nu)-1), & \text{ if } Z_0=\mathbb{P}^2, \\[2mm]
\beta_1'(\nu)(\delta\beta_1'(\nu)+1), & \text{ if }Z_0=\mathbb{F}_\delta \text{ and $\nu$ is special}, \\[2mm]
\beta_1'(\nu)- \delta, &\text{ if }Z_0=\mathbb{F}_\delta \text{ and $\nu$ is non-special,}
\end{array}\right.
$$
where $\beta_1'(\nu)$ denotes the third coordinate of $\mathbf{t}(\nu)$.  Our algorithm will have the following\\
{\bf Input:} A discrete class as in (\ref{input1}) which satisfies the inequality
\begin{equation}
\label{entrada}
q\left(\mathbf{t}(\nu_I)\right)>\sum_{j=1}^{g-1} e_j\left(\nu_I\right)^2\left(\beta_{j+1}'(\nu_I)-1\right),
\end{equation}
condition we have to impose because we start with an NPI divisorial valuation and we need the strict inequality to have some degree of freedom to add Puiseux exponents.\\
{\bf Output 1:} It will be a tuple as in (\ref{output1}) obtained as follows. Let $\beta_{g+1}'(\nu_{O_1})=q_{g+1}/p_{g+1}$, where $p_{g+1}$ and $q_{g+1}$ are positive integers such that $q_{g+1}>p_{g+1}$ and gcd$(q_{g+1},p_{g+1})=1.$ Then our output must satisfy
\[
\beta_j'(\nu_{O_1})=\beta_j'(\nu_I) \text{ and } e_j(\nu_{{O_1}}^N)=e_j(\nu_I^N), \text{ for }0\leq j \leq g,
\]
$\overline{\beta}_{0}(\nu_{O_1})=p_{g+1}\overline{\beta}_{0}(\nu_I)$ and then one obtains that
$$
e_{g+1}(\nu_{O_1}^N)=\dfrac{1}{\overline{\beta}_{0}(\nu_{O_1})}=\dfrac{1}{p_{g+1}\overline{\beta}_{0}(\nu_I)}=\dfrac{e_g(\nu_I^N)}{p_{g+1}}.
$$
Since we require that $\mathbf{t}(\nu_{O_1})$ corresponds to an NPI valuation, {\it for obtaing our Output 1 it suffices to consider any pair $p_{g+1}$ and $q_{g+1}$ defining $\beta_{g+1}'(\nu_{O_1})$ which satisfies the following inequality:}
\begin{equation}\label{Cond_AmpliarGrafoDual2}
\dfrac{q(\mathbf{t}(\nu_I)) - \sum_{j=1}^{g-1} e_j(\nu_I^N)^2(\beta_{j+1}'(\nu_I)-1)}{e_g(\nu_I^N)^2} + 1 \geq \beta_{g+1}'(\nu_{O_1})= \dfrac{q_{g+1}}{p_{g+1}}>1.
\end{equation}
Notice that the algorithm has to make a choice because one has infinitely many options.\\
{\bf Output 2:} For obtaining $\mathbf{t}(\nu_{O_2^1})$ as described in (\ref{output2}), we only need to look for an irrational number $\beta_{g+1}'(\nu_{O_2^1})$ that satisfies Inequality (\ref{Cond_AmpliarGrafoDual2}) when it reemplaces the rational number $\beta_{g+1}'(\nu_{O_1})$. Notice that, again, we have to make a choice.

Finally to get an output $\mathbf{t}(\nu_{O_2^2})$ as described in (\ref{output3}), it suffices to look for positive integer $\beta_{g+2}'(\nu_{O_2^2}) > 1$ such that
\begin{equation}
\label{elgmas2}
\dfrac{q(\mathbf{t}(\nu_{O_2^2})) - \sum_{j=1}^{g} e_j(\nu_{O_1}^N)^2(\beta_{j+1}'(\nu_{O_1})-1)}{e_{g+1}(\nu_{O_1}^N)^2} \geq \beta_{g+2}' (\nu_{O_2^2}) -1.
\end{equation}
The maximum number of free points  in the tail of the dual graph of $\nu_{O_2^2}$ would be the biggest non-negative integer $\beta_{g+2}'-1$ which satisfies the last inequality.

\medskip

We conclude this paper with two examples that show how our algorithm works.

\begin{example}
{\rm
Assume that $Z_0=\mathbb{P}^2$ and that our input is $\mathbf{t}(\nu_I)= (2,1,5/2,7/5,1)$. Then $\{e_i(\nu_I)\}_{i=0}^2=\{10,5,1\}$. Our input is correct because
$$
\dfrac{15}{4}=\dfrac{5}{2}\left(\dfrac{5}{2}-1\right)>\left(\dfrac{1}{2}\right)^2\left(\dfrac{7}{5}-1\right)=\dfrac{1}{10}
$$
and Inequality (\ref{entrada}) is satisfied.

Now taking into account Inequality (\ref{Cond_AmpliarGrafoDual2}), suitable values $\beta_3'(\nu_{O_1})$ for our purposes are those satisfying
$$
366=\dfrac{\beta_1'(\nu_I)(\beta_1'(\nu_I)-1) - e_1(\nu_I^N)^2(\beta_{2}'(\nu_I)-1)}{e_2(\nu_I^N)^2} + 1 \geq \beta_{3}'(\nu_{O_1}) > 1.
$$

If, for instance, we choose $\mathbf{t}(\nu_{O_1})= (3,1,5/2,7/5,8/3,1)$ we obtain a valid Output 1. We can continue our algorithm for finding values $\beta_4'(\nu_{O_2^2})$ whose according (\ref{elgmas2}) must satisfy $3270\geq \beta_4'(\nu_{O_2^2})-1\geq 0$. So a possible Output 2 would be $\mathbf{t}(\nu_{O_2^2})= (3,1,5/2,7/5,8/3,3200)$.
}
\end{example}

\begin{example}
{\rm
Suppose now that $Z_0=\mathbb{F}_2$ and $\mathbf{t}(\nu_I)=(3,1,5/3,12/5,5/2,1)$. This is a suitable input since $\{e_i(\nu_I)\}_{i=0}^2=\{30,10,2,1\}$,
$$
\dfrac{65}{9}=\dfrac{5}{3}\left(2\dfrac{5}{3}+1\right)>\left(\dfrac{1}{3}\right)^2\left(\dfrac{12}{5}-1\right)+\left(\dfrac{1}{15}\right)^2\left(\dfrac{5}{2}-1\right)=\dfrac{73}{450}
$$
and, then, Inequality (\ref{entrada}) holds. Thus, if we desire an output $\mathbf{t}(\nu_{O_2^1})$, we need to look for irrational numbers such that they satisfy Inequality \eqref{Cond_AmpliarGrafoDual2}. That is values $\beta_4'(\nu_{O_2^1})$ such that $6355\geq\beta_4'(\nu_{O_2^1}) >0.$ Then, $\mathbf{t}(\nu_{O_2^1})=(3,1,5/3,12/5,5/2,\pi)$ is a valid output.
}
\end{example}


\bibliographystyle{plain}
\bibliography{BIBLIO}

\end{document}